\newcommand{\be}{\begin{eqnarray}}
\newcommand{\ee}{\end{eqnarray}}
\newcommand{\bc}{\begin{center}}
\newcommand{\ec}{\end{center}}
\newcommand{\bq}{\begin{que}}
\newcommand{\eq}{\end{que}}
\newcommand{\bqi}{\begin{quei}}
\newcommand{\eqi}{\end{quei}}
\newenvironment{proof}{{\textbf{\textit{Proof.}}}}{\linespread{1.66}\hspace{\stretch{1}}{$\square$}}
\newtheorem{teor}{Theorem}[section]
\newtheorem{lemma}{Lemma}[section]
\newtheorem{Rem}{Remark}[section]
\begin{document}

%\doublespace

\begin{frontmatter}

\title{Modeling transient currents in time-of-flight experiments with tempered time-fractional diffusion equations
}

\author[MLM]{Maria Lu\'isa Morgado  \corref{cor1} 
} \ead{luisam@utad.pt}

\author[LFM]{Lu\'is Filipe Morgado %\fnref{fn3}
} \ead{lmorgado@utad.pt}

\address[MLM]{Center for Computational and Sthocastic Mathematics, Instituto Superior T\'ecnico and Department of Mathematics, University of Tr\'as-os-Montes e Alto Douro, UTAD, Quinta de Prados 5001-801, Vila Real, Portugal}
\address[LFM]{Instituto de Telecomunica\c c\~oes, Lisboa and Department of Physics, University of Tr\'as-os-Montes e Alto Douro, UTAD, Quinta de Prados 5001-801, Vila Real, Portugal}

\pagestyle{plain} \setcounter{page}{1}

\begin{abstract}
In this work we use tempered fractional advection-diffusion equations to model the dispersive transport in disordered materials. A numerical method is derived to approximate the solution of such differential models and we prove that it is convergent and stable. Two numerical examples are presented. The first one,  with known analytical solution, is given to illustrate the performance of the numerical scheme; the second one is used to show that such models are appropriate to model time of flight transient currents  for some disordered materials.
\end{abstract}

\begin{keyword} Caputo derivative \sep advection-diffusion equation \sep graded mesh \sep disordered materials \sep time-of-flight

%\MSC 
%
\end{keyword}

\end{frontmatter}

\section{Introduction}
\
\noindent

Nowadays, there is no doubt that Fractional Calculus, which consists of the study of integral and differential operators of non-integer order, is a powerful tool in the modeling of many processes in science and engineering (see for example the books \cite{Baleanu_book}, \cite{DiethelmBook}, \cite{SKM} and the references therein). Although it is more than three hundred years old, only in the last decades it has been investigated intensively. Since its very beginning, that many recognized scientists have contributed to their development and along these last three centuries several definitions (not always equivalent) of fractional derivatives, generalising the integer-order ones, arose. We believe that the Riemann-Liouville and the Caputo derivatives are the most commonly found in the literature, mainly the last ones when application problems are considered (\cite{DiethelmBook}).

Due to the features of fractional integral and differential operators, the need for numerical methods is even more evident in the fractional setting and their development poses us new challenges that did not occur when integer-order models were considered. These challenges may be resumed in two items: the computational effort due to the nonlocality of the fractional operators and the accuracy of the numerical schemes due to the singular nature of the solutions of fractional differential equations. Although numerous papers have been reported in the last decades concerning the numerical approximation of fractional differential equations, we think that in many of them, these two issues were not fully addressed,  and hence further investigation is needed within this respect.

Here we are concerned with the modeling of the the transient current in time-of-flight (ToF) experiments in disordered materials. Such experiments consist on the measurement of the transient current generated by the movement, due to an externally applied electric field $E$, of the excess charge carriers  usually generated by a laser  pulse. Several results from this experiment for disordered materials, namely organic semiconductors, usually exhibit an anomalous dispersive behavior (\cite{Scher1975}), when the transient current $I(t)$ curve presents two regions with power-law behavior, separated by the  "transit time" $t_{tr}$:
\begin{equation}\label{It}
I(t)\sim\begin{cases}
t^{-1+\alpha} &\text{if $t<t_{tr}$}\\
t^{-1-\alpha} &\text{if $t>t_{tr}$}
\end{cases}, 0<\alpha< 1.
\end{equation}
From experimental $I(t)$ curves, usually the $t_{tr}$ is obtained, graphically, from the intersection of the two power-law curves, which is then used to determine the carrier drift mobility.
This behavior was attributed to the trapping of carriers, in localized states distributed in the mobility gap, for times $\tau$, determined by a relaxation function with an asymptotic time dependence of the form $\sim \tau^{-\alpha}$, with non integer dispersion parameter $\alpha$. Alternative physical explanations involve other mechanisms such as phonon assisted hopping conduction and percolation through conducting states.
In some cases, as in \cite{Takeda}, the transient current reveals two different values for the dispersive parameter, $\alpha_i$ for shorter times and $\alpha_f$ for longer times. One possible justification for this fact is based on the assumption that the energy distribution of deep traps is in fact truncated leading to a truncated waiting time distribution (\cite{UchaikinBook}).

This paper continues the investigation initiated in \cite{Morgado}, where the following class of initial-boundary value problem was considered:
\begin{equation}\label{TFADE}
D^{\alpha}_tu(x,t)=-v\frac{\partial u(x,t)}{\partial x}+D\frac{\partial ^{2}u(x,t)}{\partial x^{2}}+f(x,t),~t \in (0,T],~x \in (0,L),
\end{equation}
with initial condition
\begin{equation}\label{init_cond}
u(x,0)=g(x), \quad x\in (0,L),
\end{equation}
and boundary conditions
\begin{equation}\label{bound_cond}
u(0,t)=\phi_0(t), ~~u(L,t)=\phi_L(t), \quad t\in(0,T],
\end{equation}
where $0< \alpha <1$ and the fractional derivative is of the Caputo type which, for the considered values of $\alpha$, is given by (\cite{DiethelmBook}):
\begin{equation}\label{CapDer}
D_t^\alpha y(t) =\frac{1}{\Gamma(1-\alpha)}\int_0^t (t-s)^{-\alpha}y'(s)~ds.
\end{equation}
The unknown function $u$ is usually referred as the solution concentration, $v>0$ is the average fluid velocity, and $D>0$ is the diffusion coefficient. We assume that $g$, $f$, $\phi_0$ and $\phi_L$ are continuous functions in their respective domains.\\
In \cite{Morgado}, an implicit numerical scheme on a time graded mesh was developed to approximate the solution of the initial-boundary value problem above. The reason for using a graded mesh in the time variable, is due to the fact that, as it is known (\cite{Philippa2011}), the solution of this type of problems exhibits a sharp behavior near the origin (which is in agreement with the fact that, usually, fractional differential equations have singularities at $t=0$, in the sense that the derivatives of the solution may become unbounded near that point), and therefore in order to obtain reasonable accurate results it is convenient to use small step-sizes near that point. This was achieved by considering a partition of the time interval into $n$ subintervals defined by the mesh-points:
\begin{equation}\label{grad}t_i=\left( \frac i n\right)^rT,\end{equation}
where $r \ge 1$ is the so-called grading exponent. The length of each one of the intervals defined with this partition is:
\[\tau_i=t_{i+1}-t_i=\frac{(i+1)^r-i^r}{n^r}T, \quad i=0,1, \ldots,n-1.\]
Note that if $r=1$ we obtain a uniform mesh, that is, a mesh where all the subintervals in the partition have the same length ($\tau_i=\tau=\frac T n, ~~i=0,1, \ldots,n-1$), while if $r>1$, the grid-points are more densely placed in the left-hand side of the interval $[0,T]$, as desired.\\
In \cite{Morgado}, the proper choice of the grading exponent was left for future work, and this is one of the aspects we want to address here, basing us on the results of \cite{Stynes}. Moreover, the scheme presented in \cite{Morgado} was first order convergent in space, while here we develop a second order convergent method with respect to the space variable. Another contribution here is that we will generalize the model (\ref{TFADE}) by considering tempered Caputo derivatives instead of the Caputo ones.

By considering the probability density function of L\'evy distributions, the usual (non-tempered) fractional Riemann-Liouville derivatives of order $\alpha$ arise (and consequently, the Caputo derivatives), but since the probability density function of L\'evy distributions decays as $\displaystyle \left|t\right|^{-1-\alpha}$, first or second moments will be divergent (see \cite{Uchaikin1999}). By exponentially tempering the probability of large jumps of L\'evy flights with a parameter $\lambda>0$, the density function decays as  $\displaystyle \left|t\right|^{-1-\alpha}e^{-\lambda \left|t\right|}$ and then finite moments are obtained and the definition of tempered fractional derivatives arise (\cite{Baumer}):
\begin{eqnarray}
\label{LeftC}
\mathbb{D}_t^{\alpha,\lambda}\left(y(t)\right)&=&e^{-\lambda t}D_t^{\alpha}\left(e^{\lambda t} y(t)\right)\\
\nonumber &=&\frac{e^{-\lambda t} }{\Gamma(1-\alpha)}\int_{0}^{t}\frac{1}{(t-s)^{\alpha}}\frac{d\left(e^{\lambda s}y(s)\right)}{d s} ds,~~0<\alpha<1,~\lambda\ge 0.
\end{eqnarray}
Note that if in the equation above, we consider $\lambda=0$, the definition of the usual Caputo derivative (\ref{CapDer}) is recovered.\\
Hence, here we will consider the following model (which obviously reduces to (\ref{TFADE})-(\ref{bound_cond}) in the case where we have $\lambda=0$ and absorbing boundary conditions):
\begin{eqnarray}&&\label{TTFADE}\mathbb{D}_t^{\alpha,\lambda}u(x,t)=-v\frac{\partial u(x,t)}{\partial x}+D\frac{\partial ^{2}u(x,t)}{\partial x^{2}}+f(x,t),~t \in (0,T],~x \in (0,L),\\
&&\label{Tinit_cond}u(x,0)=g(x), \quad x\in (0,L),\\
&&\label{Tbound_cond}u(0,t)=0, ~~u(L,t)=0, \quad t\in(0,T],
\end{eqnarray}
where $\displaystyle \mathbb{D}_t^{\alpha,\lambda}u(x,t)$ is the tempered Caputo derivative with respect to the variable $t$ of the function $u(x,t)$. We also assume $f(x,t)\equiv 0$, but we choose to keep this term in the model above for technical reasons. In fact, it will give us the opportunity to build problems with known analytical solutions and, in this way, we will be able to compare our numerical results with the analytical ones.

The paper is organised in the following way: in the forthcoming section we describe the numerical method to solve (\ref{TTFADE})-(\ref{Tbound_cond}) and we prove that it is convergent and stable. In section \ref{numerics} we firt test the robustness of the numerical method through some numerical experiments for an example with known analytical solution and then we use it to model the transient current in ToF experiments. We end with some conclusions about the advantages of considering this type of equations to model such processes.

\section{Numerical scheme}
In this section we develop an implicit numerical method for  the approximate solution of (\ref{TTFADE})-(\ref{Tbound_cond}). First, taking (\ref{LeftC}) into account, we note that (\ref{TTFADE}) can be written as
\[D_t^{\alpha}\left(e^{\lambda t}u(x,t)\right)=-v\frac{\partial \left(e^{\lambda t}u(x,t)\right)}{\partial x}+D\frac{\partial ^{2}\left(e^{\lambda t}u(x,t)\right)}{\partial x^{2}}+e^{\lambda t}f(x,t).\] 
If we consider the function
\begin{equation}
\label{relat}y(x,t)=e^{\lambda t}u(x,t),
\end{equation}
and we determine the solution $y(x,t)$ of problem:
\begin{eqnarray}\label{Eqaux}
&&D_t^{\alpha}y(x,t)=-v\frac{\partial y(x,t)}{\partial x}+D\frac{\partial ^{2}y(x,t)}{\partial x^{2}}+e^{\lambda t}f(x,t),~t \in (0,T],~x \in (0,L),\\
&&\label{IC}y(x,0)=g(x), \quad x\in (0,L),\\
&&\label{BC}
y(0,t)=0, ~~y(L,t)=0, \quad t\in(0,T],
\end{eqnarray}
then the solution of (\ref{TTFADE})-(\ref{Tbound_cond}) is obtained through
\[u(x,t)=e^{-\lambda t}y(x,t).\]
Therefore, in what follows we present a numerical scheme for the solution of (\ref{Eqaux})-(\ref{BC}).
In order to do that, we consider a uniform mesh in the interval $[0,L]$, defined by the grid-points $x_i=ih$, $i=0,1,\ldots,K$, where $h=\frac L K$, and we use the following second order finite difference approximations, assuming that the solution possess fourth order continuous derivatives with respect to $x$:
\begin{eqnarray}
\label{AppSpDer1} \frac{\partial y(x_i,t)}{\partial x}&\approx &\frac{y(x_{i+1},t)-y(x_{i-1},t)}{2h},\\
\label{AppSpDer2} \frac{\partial ^{2}y(x_i,t)}{\partial x^{2}}&\approx & \frac{y(x_{i+1},t)-2y(x_{i},t)+y(x_{i-1},t)}{h^2}, \quad i=1,\ldots,K-1.
\end{eqnarray}

For the numerical approximation of the Caputo derivative of order $\alpha$  on the interval $[0,T]$, we will use the non-uniform mesh (\ref{grad}),
and use the following approximation for the Caputo derivative (see \cite{Morgado}):
\begin{equation}\label{CapDerGradMesh1}
D^\alpha y(t_k) \approx \frac{1}{\Gamma(2-\alpha)}\sum_{j=0}^{k-1}\tau_j^{-\alpha}a_{j,k}\left(y(t_{j+1})-y(t_j)\right)=\tilde{D}^{\alpha} y_k,
\end{equation}
where
\begin{equation}\label{coef}
a_{j,k}=\left( \frac{k^r-j^r}{(j+1)^r-j^r}\right)^{1-\alpha}-\left(\frac{k^r-(j+1)^r}{(j+1)^r-j^r}\right)^{1-\alpha}, \quad j=0,1, \ldots, k-1,~~k=1, \ldots,n.
\end{equation}
The analytical solution of this kind of problems with $f(x,t)\equiv 0$ is known  (see, for example Eq. (4) in \cite{Philippa2011}) showing that a singular behavior of the solution near the origin in time can be expected, and therefore, it is reasonable to consider that
\begin{equation}\label{sing_behav}
\left|\frac{\partial ^{\ell}}{\partial t^{\ell}}y(x,t)\right| \le C\left(1+t^{\alpha -\ell}\right), \quad \ell=0,1,2,
\end{equation}
for some positive constant $c$ and for all $(x,t)\in [0,L] \times (0,T]$.
Hence, under this assumption, and concerning the order of the approximation we have (see \cite{Stynes}):
\begin{eqnarray}
\nonumber \left|D_t^\alpha y(t_k) -\tilde{D}^{\alpha} y_k\right|&\le &Ck^{-\min\left\{2-\alpha,r \alpha\right\}}.
\end{eqnarray}

Using (\ref{CapDerGradMesh1}), we obtain:
\begin{equation}\label{AppTimeDer}
D_t^{\alpha}y(x_i,t_l)\approx \frac{1}{\Gamma(2-\alpha)}\sum_{j=0}^{l-1}\tau_{j}^{-\alpha}a_{j,l}\left(y(x_i,t_{j+1})-y(x_i,t_{j})\right), \quad i=1,\ldots,K-1,~~l=1,\ldots, n,
\end{equation}
where the coefficients $a_{j,l}$ are defined in (\ref{coef}). Denoting by $Y_i^l\approx y(x_i,t_l)$, $f_i^l=f(x_i,t_l)$ and substituting (\ref{AppSpDer1}), (\ref{AppSpDer2}) and (\ref{AppTimeDer}) in (\ref{Eqaux}), we obtain the following implicit numerical scheme:
\begin{eqnarray}\label{NumMeth} &&\frac{1}{\Gamma(2-\alpha)}\sum_{j=0}^{l-1} \tau_{j}^{-\alpha}a_{j,l}\left(Y_i^{j+1}-Y_i^{j}\right)=-v \frac{Y_{i+1}^l-Y_{i-1}^l}{2h}+D\frac{Y_{i+1}^l-2Y_{i}^l+Y_{i-1}^l}{h^2}+e^{\lambda t_l}f_i^l, \\
\nonumber && i=1,\ldots,K-1,~l=1,\ldots,n,
\end{eqnarray}
where, according to the initial and boundary conditions (\ref{IC}) and (\ref{BC}), we have
\begin{eqnarray}
\nonumber Y_i^0&=&g(x_i),~~i=1,\ldots,K-1,\\
\nonumber Y_0^l&=&0,~~Y_K^l=0,~~l=1,\ldots,n.
\end{eqnarray}
After having determined the unknowns $Y_i^l$, $i=1,\ldots,K-1,~l=1,\ldots,n$, the solution of (\ref{TTFADE})-(\ref{Tbound_cond}) at the mesh-points will be given by:
\[u(x_i,t_l)\approx U_i^l=e^{-\lambda t_l} Y_i^l.\]
\subsection{Stability of the numerical scheme}\label{StabCon}
In this subsection, we prove the stability of the numerical scheme just described, which can be written as
\begin{equation}\label{meth}
T_1Y_i^l=T_2Y_i^{l-1}+e^{\lambda t_l}f_i^l, ~~i=1,\ldots,K-1,~l=1,\ldots,n,
\end{equation}
where 
\begin{eqnarray}
\nonumber T_1Y_i^l&=&\frac{\tau_{l-1}^{-\alpha}}{\Gamma(2-\alpha)}Y_i^l+v \frac{Y_{i+1}^l-Y_{i-1}^l}{2h}-D\frac{Y_{i+1}^l-2Y_{i}^l+Y_{i-1}^l}{h^2},\\
\nonumber T_2Y_i^{l-1}&=&\frac{\tau_{l-1}^{-\alpha}}{\Gamma(2-\alpha)}Y_i^{l-1}-\frac{1}{\Gamma(2-\alpha)}\sum_{j=0}^{l-2} \tau_{j}^{-\alpha}a_{j,l}\left(Y_i^{j+1}-Y_i^{j}\right).
\end{eqnarray}
We start with some auxiliary results, proved in \cite{Morgado}, that will be needed later.
\begin{lemma}\label{le}
The coefficients $a_{j,l}$, $j=0,\ldots,l-2,~~l=1,\ldots,n$, defined by (\ref{coef}) satisfy:
\begin{eqnarray}
\label{pos} && a_{j,l} > 0,\\
\label{pos1} && \sum_{j=0}^{l-2}(\tau_{j+1}^{-\alpha}a_{j+1,l}-\tau_{j}^{-\alpha}a_{j,l})=-\tau_{0}^{-\alpha}a_{0,l}+\tau_{l-1}^{-\alpha},\\
\label{pos2} && \tau_{j+1}^{-\alpha}a_{j+1,l}>\tau_{j}^{-\alpha}a_{j,l}.
\end{eqnarray}
\end{lemma}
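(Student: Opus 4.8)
The plan is to prove the three claims about the coefficients $a_{j,l}$ directly from their closed-form definition (\ref{coef}), exploiting the monotonicity and convexity properties of the power function $x \mapsto x^{1-\alpha}$ with exponent $1-\alpha \in (0,1)$. First I would establish (\ref{pos}): since the mesh-points satisfy $t_i = (i/n)^r T$, for $j \le l-1$ we have $k^r - j^r > k^r - (j+1)^r \ge 0$ (writing $k=l$), so both terms inside $a_{j,l}$ are nonnegative, and because the two fractions have the same positive denominator $(j+1)^r - j^r$ with the first numerator strictly larger, raising to the power $1-\alpha > 0$ (a strictly increasing map on $[0,\infty)$) gives $a_{j,l} > 0$ immediately.

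Next I would treat (\ref{pos1}), which is the cleanest of the three because it is a telescoping identity. Writing out $\sum_{j=0}^{l-2}(\tau_{j+1}^{-\alpha}a_{j+1,l} - \tau_{j}^{-\alpha}a_{j,l})$ collapses to $\tau_{l-1}^{-\alpha}a_{l-1,l} - \tau_{0}^{-\alpha}a_{0,l}$. The work is then to verify that the boundary term $a_{l-1,l}$ equals $1$. Setting $j=l-1$ in (\ref{coef}), the second fraction has numerator $k^r - (j+1)^r = l^r - l^r = 0$, so the second term vanishes, while the first fraction becomes $\bigl((l^r-(l-1)^r)/(l^r-(l-1)^r)\bigr)^{1-\alpha} = 1$; hence $a_{l-1,l}=1$ and the telescoped sum is exactly $\tau_{l-1}^{-\alpha} - \tau_{0}^{-\alpha}a_{0,l}$, as required.

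The main obstacle is (\ref{pos2}), the monotonicity $\tau_{j+1}^{-\alpha}a_{j+1,l} > \tau_{j}^{-\alpha}a_{j,l}$, since here the mesh-spacing weights $\tau_j^{-\alpha}$ interact with the coefficients and one cannot merely compare the $a_{j,l}$ termwise. My approach would be to substitute the definition $\tau_j = \frac{(j+1)^r - j^r}{n^r}T$ into $\tau_j^{-\alpha}a_{j,l}$ and absorb the factor $((j+1)^r-j^r)^{-\alpha}$ into $a_{j,l}$, so that the common denominator inside $a_{j,l}$ combines with $\tau_j^{-\alpha}$ to produce a cleaner expression of the form $(T/n^r)^{-\alpha}\bigl[(k^r-j^r)^{1-\alpha} - (k^r-(j+1)^r)^{1-\alpha}\bigr]$ divided by $((j+1)^r - j^r)$. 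Defining $G(s) = (k^r - s)^{1-\alpha}$ on the relevant range, the bracketed difference is $G(j^r) - G((j+1)^r)$, and the quantity $\tau_j^{-\alpha}a_{j,l}$ becomes a difference quotient of $G$ over the interval $[j^r, (j+1)^r]$; the desired inequality then reduces to showing these consecutive difference quotients increase with $j$. Since $G$ is convex on $(-\infty, k^r)$ (as $G''(s) = (1-\alpha)(-\alpha)(-1)^2(k^r-s)^{-1-\alpha} \cdot$ sign considerations give $G'' > 0$ for $0 < \alpha < 1$), the slopes of $G$ over successive subintervals are increasing, which yields (\ref{pos2}). I expect the delicate point to be handling the unequal subinterval lengths $(j+1)^r - j^r$ introduced by the graded mesh, so rather than invoking convexity of a chord abstractly I would phrase the comparison as a mean-value statement: each difference quotient equals $-G'(\xi_j)$ for some $\xi_j \in (j^r,(j+1)^r)$ by the mean value theorem, the points $\xi_j$ are increasing in $j$, and $-G'(s) = (1-\alpha)(k^r - s)^{-\alpha}$ is strictly increasing in $s$ on $(-\infty, k^r)$, giving the result cleanly without fighting the mesh nonuniformity.
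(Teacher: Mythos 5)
Your proposal is correct, but note that this paper never proves the lemma at all: it simply states that the properties were ``proved in \cite{Morgado}'' and uses them as black boxes. So your self-contained argument is genuinely additional content rather than a variant of an in-paper proof. The substance of your argument is sound: positivity follows from strict monotonicity of $x\mapsto x^{1-\alpha}$ applied to the two nonnegative numerators sharing the denominator $(j+1)^r-j^r$; the identity (\ref{pos1}) is a pure telescoping once you check the boundary value $a_{l-1,l}=1$ (the second fraction vanishes at $j=l-1$, the first equals $1$); and your key simplification
\begin{equation*}
\tau_j^{-\alpha}a_{j,l}=\Bigl(\tfrac{T}{n^r}\Bigr)^{-\alpha}\,
\frac{(l^r-j^r)^{1-\alpha}-(l^r-(j+1)^r)^{1-\alpha}}{(j+1)^r-j^r}
\end{equation*}
correctly absorbs the mesh widths so that, with $G(s)=(l^r-s)^{1-\alpha}$, the quantity is (up to a fixed positive constant) the difference quotient $\bigl(G(j^r)-G((j+1)^r)\bigr)/\bigl((j+1)^r-j^r\bigr)=-G'(\xi_j)$, $\xi_j\in(j^r,(j+1)^r)$, and (\ref{pos2}) follows because the $\xi_j$ lie in disjoint increasing intervals and $-G'(s)=(1-\alpha)(l^r-s)^{-\alpha}$ is strictly increasing up to $s=l^r$. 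The mean value theorem applies even on the last subinterval $[(l-1)^r,l^r]$, where $G'$ blows up at the right endpoint, since only differentiability on the open interval is needed.

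One flaw to fix before this could stand as a proof: your convexity aside has the sign backwards. Computing carefully, $G''(s)=-\alpha(1-\alpha)(l^r-s)^{-1-\alpha}<0$, so $G$ is \emph{concave}, not convex; moreover the quantities you are comparing are the \emph{negatives} of the chord slopes of $G$, so the chain ``convex $\Rightarrow$ slopes increasing $\Rightarrow$ (\ref{pos2})'' contains two compensating sign errors and, as written, proves nothing. Concavity of $G$ makes chord slopes decrease, hence their negatives increase, which is the correct route if you want the chord-comparison phrasing. Since your final mean-value formulation states the correct monotone function $-G'(s)=(1-\alpha)(l^r-s)^{-\alpha}$ explicitly, the proof goes through; simply delete or correct the convexity sentence.
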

In order to prove the stability of the numerical scheme, let us assume that the initial data has error $\epsilon_i^0$,  that is, let us assume that $\tilde{g}(x_i)=g(x_i)+\epsilon_i^0$, $i=1,2,\ldots,K-1$, and let $Y_i^l$ and $\tilde{Y}_i^l$ be the solutions of (\ref{meth}) corresponding to the initial data $g$ and $\tilde{g}$, respectively. Defining $\epsilon_i^l=Y_i^l-\tilde{Y}_i^l$, we have
\[T_1 \epsilon_i^l=T_2\epsilon_i^{l-1},~~i=1,2,\ldots,K-1, ~l=1,2,\ldots,n.\]
Setting $\left\|E^l\right\|_{\infty}=\max_{1\le i \le K-1}\left|\epsilon_i^l\right|$, we next prove, using mathematical induction, that
\[\left\|E^l\right\|_{\infty} \le \left\|E^0\right\|_{\infty}\]
is satisfied for all $l=1,2,\ldots,n$.\\
For $l=1$, let $p \in \mathbb{N}$ be such that $\left\|E^
1\right\|_{\infty}=\max_{1\le i \le K-1}\left|\epsilon_i^1\right|=\left|\epsilon_p^1\right|$. Then,
\begin{eqnarray}
\nonumber \frac{\tau_0^{-\alpha}}{\Gamma(2-\alpha)}\left\|E^
1\right\|_{\infty}&=&\frac{\tau_0^{-\alpha}}{\Gamma(2-\alpha)}\left|\epsilon_p^1\right|=\frac{\tau_0^{-\alpha}}{\Gamma(2-\alpha)}\left|\epsilon_p^1\right|+\frac{2D}{h^2}\left|\epsilon_p^1\right|-\frac{2D}{h^2}\left|\epsilon_p^1\right|+\frac{2D}{h^2}\left|\epsilon_p^1\right|\\
\nonumber &=& \frac{\tau_0^{-\alpha}}{\Gamma(2-\alpha)}\left|\epsilon_p^1\right|+\frac{2D}{h^2}\left|\epsilon_p^1\right|-\left(\frac{D}{h^2}-\frac{v}{2h}\right) \left|\epsilon_p^1\right|-\left(\frac{D}{h^2}+\frac{v}{2h}\right) \left|\epsilon_p^1\right|
\end{eqnarray}
If $\displaystyle h<\frac{2D}{v}$, then $\displaystyle \frac{2D-vh}{2h^2}>0$, and therefore,
\begin{eqnarray}
\nonumber \frac{\tau_0^{-\alpha}}{\Gamma(2-\alpha)}\left\|E^
1\right\|_{\infty} &\le & \frac{\tau_0^{-\alpha}}{\Gamma(2-\alpha)}\left|\epsilon_p^1\right|+\frac{2D}{h^2}\left|\epsilon_p^1\right|-\left(\frac{D}{h^2}-\frac{v}{2h}\right) \left|\epsilon_{p+1}^1\right|-\left(\frac{D}{h^2}+\frac{v}{2h}\right) \left|\epsilon_{p-1}^1\right|\\
\nonumber & \le & \left|\frac{\tau_0^{-\alpha}}{\Gamma(2-\alpha)}\epsilon_p^1 +v\frac{\epsilon_{p+1}^1-\epsilon_{p-1}^1}{2h}-D\frac{\epsilon_{p+1}^1-2\epsilon_p^1+\epsilon_{p-1}^1}{h^2}\right|\\
\nonumber &=& \left|T_1\epsilon_p^1\right|=\left|T_2\epsilon_p^0\right|=\left|\frac{\tau_0^{-\alpha}}{\Gamma(2-\alpha)}\epsilon_p^0\right|\le \frac{\tau_0^{-\alpha}}{\Gamma(2-\alpha)}\left\|E^
0\right\|_{\infty},
\end{eqnarray}
and then if follows that $\left\|E^
1\right\|_{\infty} \le \left\|E^
0\right\|_{\infty}$.\\
Let us now assume that $\left\|E^
j\right\|_{\infty} \le \left\|E^
0\right\|_{\infty}$, $j=1,\ldots,l-1$, and assume also that $p\in\mathbb{N}$ is such that $\left\|E^
l\right\|_{\infty}=\left|\epsilon_p^l\right|$. Hence following the same steps as above,
\begin{eqnarray}
\nonumber \frac{\tau_{l-1}^{-\alpha}}{\Gamma(2-\alpha)}\left\|E^
l\right\|_{\infty}&=&\frac{\tau_{l-1}^{-\alpha}}{\Gamma(2-\alpha)}\left|\epsilon_p^l\right| \le  \left|T_1\epsilon_p^l\right|=\left|T_2\epsilon_p^{l-1}\right|\\
\nonumber & =& \left| \frac{\tau_{l-1}^{-\alpha}}{\Gamma(2-\alpha)}\epsilon_p^{l-1}-\frac{1}{\Gamma(2-\alpha)}\sum_{j=0}^{l-2} \tau_{j}^{-\alpha}a_{j,l}\left(\epsilon_p^{j+1}-\epsilon_p^{j}\right)\right|\\
\nonumber &=& \frac{1}{\Gamma(2-\alpha)}\left|\tau_0^{-\alpha}a_{0,l}\epsilon_p^0+\sum_{j=0}^{l-2}\left( \tau_{j+1}^{-\alpha}a_{j+1,l}-\tau_j^{-\alpha}a_{j,l}\right) \epsilon_p^{j+1}\right|.
\end{eqnarray}
Using (\ref{pos2}), the induction hypothesis and (\ref{pos1}), it follows that:
\begin{eqnarray}
\nonumber \frac{\tau_{l-1}^{-\alpha}}{\Gamma(2-\alpha)}\left\|E^
l\right\|_{\infty}& \le & \frac{1}{\Gamma(2-\alpha)}\left(\tau_0^{-\alpha}a_{0,l}\left|\epsilon_p^0\right|+\sum_{j=0}^{l-2}\left( \tau_{j+1}^{-\alpha}a_{j+1,l}-\tau_j^{-\alpha}a_{j,l}\right) \left|\epsilon_p^{j+1}\right|\right)\\
\nonumber & \le & \frac{\left\|E^
0\right\|_{\infty}}{\Gamma(2-\alpha)}\left( \tau_0^{-\alpha}a_{0,l}+\sum_{j=0}^{l-2}\left( \tau_{j+1}^{-\alpha}a_{j+1,l}-\tau_j^{-\alpha}a_{j,l}\right)\right)\\
\nonumber &=& \frac{\left\|E^
0\right\|_{\infty}}{\Gamma(2-\alpha)}\tau_{l-1}^{\alpha}.
\end{eqnarray}
We can conclude that $\left\|E^
l\right\|_{\infty} \le \left\|E^
0\right\|_{\infty}$, $l=1,2,\ldots,n$, and then the following result is proved.
\begin{teor}
If $\displaystyle h<\frac{2D}{v}$, the numerical scheme (\ref{meth}) is stable.
\end{teor}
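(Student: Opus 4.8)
The plan is to prove stability in the discrete maximum norm: if a perturbation $\epsilon_i^0$ is introduced in the initial data, I want to show that the resulting error $\epsilon_i^l = Y_i^l-\tilde Y_i^l$ between the two numerical solutions satisfies $\|E^l\|_\infty \le \|E^0\|_\infty$ for every $l$, where $\|E^l\|_\infty = \max_{1\le i\le K-1}|\epsilon_i^l|$. The first step is to subtract the two instances of (\ref{meth}) driven by $g$ and $\tilde g$; since both carry the same forcing $e^{\lambda t_l}f_i^l$, this term cancels and the errors obey the homogeneous recursion $T_1\epsilon_i^l = T_2\epsilon_i^{l-1}$. Everything then reduces to comparing $T_1$ and $T_2$, which I would do by induction on the time level $l$.

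For the base case $l=1$ I would select the spatial index $p$ with $|\epsilon_p^1| = \|E^1\|_\infty$ and rewrite the diagonal quantity $\frac{\tau_0^{-\alpha}}{\Gamma(2-\alpha)}|\epsilon_p^1|$ by adding and subtracting $\frac{2D}{h^2}|\epsilon_p^1|$ and splitting the latter into the one-sided pieces $\bigl(\frac{D}{h^2}-\frac{v}{2h}\bigr)|\epsilon_p^1|$ and $\bigl(\frac{D}{h^2}+\frac{v}{2h}\bigr)|\epsilon_p^1|$. The hypothesis $h<\frac{2D}{v}$ is exactly what renders $\frac{D}{h^2}-\frac{v}{2h}>0$, so that replacing $|\epsilon_p^1|$ by the neighbouring values $|\epsilon_{p+1}^1|$ and $|\epsilon_{p-1}^1|$, each no larger than the maximum, can only diminish the right-hand side; a triangle inequality then recombines the terms into $|T_1\epsilon_p^1| = |T_2\epsilon_p^0|$. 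At $l=1$ the memory sum in $T_2$ is empty, so $T_2\epsilon_p^0 = \frac{\tau_0^{-\alpha}}{\Gamma(2-\alpha)}\epsilon_p^0$, and cancelling the common factor yields $\|E^1\|_\infty\le\|E^0\|_\infty$.

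For the inductive step I would assume $\|E^j\|_\infty\le\|E^0\|_\infty$ for $j=1,\dots,l-1$, choose $p$ with $|\epsilon_p^l|=\|E^l\|_\infty$, and repeat the identical spatial manipulation to reach $\frac{\tau_{l-1}^{-\alpha}}{\Gamma(2-\alpha)}\|E^l\|_\infty \le |T_1\epsilon_p^l| = |T_2\epsilon_p^{l-1}|$. The decisive manoeuvre is a summation by parts applied to the memory term, which recasts $-\sum_{j=0}^{l-2}\tau_j^{-\alpha}a_{j,l}(\epsilon_p^{j+1}-\epsilon_p^j)$ together with the diagonal term into $\tau_0^{-\alpha}a_{0,l}\epsilon_p^0 + \sum_{j=0}^{l-2}(\tau_{j+1}^{-\alpha}a_{j+1,l}-\tau_j^{-\alpha}a_{j,l})\epsilon_p^{j+1}$, using $a_{l-1,l}=1$. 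Here Lemma \ref{le} carries the argument: (\ref{pos}) and (\ref{pos2}) make every coefficient of this rearrangement positive, so the triangle inequality extracts the absolute values without sign changes; the induction hypothesis bounds each $|\epsilon_p^{j+1}|$ and $|\epsilon_p^0|$ by $\|E^0\|_\infty$; and the telescoping identity (\ref{pos1}) sums those coefficients to $\tau_{l-1}^{-\alpha}$. Dividing through by $\frac{\tau_{l-1}^{-\alpha}}{\Gamma(2-\alpha)}$ closes the induction and gives the claim.

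The main obstacle is not the algebra but aligning two independent positivity structures. The spatial one is a discrete maximum principle for the centered advection-diffusion stencil and is available precisely under $h<\frac{2D}{v}$; the temporal one is furnished entirely by Lemma \ref{le}, whose monotonicity property (\ref{pos2}) for the weights $\tau_j^{-\alpha}a_{j,l}$ is what keeps the rearranged memory sum sign-definite. I would also verify the final exponent carefully: by (\ref{pos1}) the telescoped coefficients total $\tau_{l-1}^{-\alpha}$, so the closing estimate is $\frac{\tau_{l-1}^{-\alpha}}{\Gamma(2-\alpha)}\|E^l\|_\infty \le \frac{\|E^0\|_\infty}{\Gamma(2-\alpha)}\tau_{l-1}^{-\alpha}$, whose right-hand factor matches the diagonal factor exactly and makes the cancellation clean.
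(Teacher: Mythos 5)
Your proposal reproduces the paper's proof essentially verbatim: the same reduction to the homogeneous recursion $T_1\epsilon_i^l=T_2\epsilon_i^{l-1}$, the same induction on time levels with the index $p$ attaining the maximum, the same splitting of $\frac{2D}{h^2}|\epsilon_p^l|$ into the one-sided pieces whose positivity is guaranteed by $h<\frac{2D}{v}$, and the same use of Lemma \ref{le} (summation by parts, positivity (\ref{pos})--(\ref{pos2}), telescoping (\ref{pos1})) to control the memory term. Only two small remarks: replacing $|\epsilon_p^1|$ by the neighbouring values in the subtracted terms \emph{increases} the right-hand side rather than diminishing it (which is exactly the direction needed for the inequality chain you write), and your closing bound $\frac{\tau_{l-1}^{-\alpha}}{\Gamma(2-\alpha)}\left\|E^l\right\|_{\infty}\le\frac{\left\|E^0\right\|_{\infty}}{\Gamma(2-\alpha)}\tau_{l-1}^{-\alpha}$ is the correct one, silently fixing the exponent typo ($\tau_{l-1}^{\alpha}$) in the paper's final display.
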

\subsection{Convergence of the numerical scheme}
In order to prove the convergence order of the numerical scheme, let us first note that taking into account (\ref{AppSpDer1}), (\ref{AppSpDer2}),  (\ref{AppTimeDer}) and Theorem 5.2 of \cite{Stynes}, the solution of (\ref{Eqaux})-(\ref{BC}) satisfies:
\begin{eqnarray}\nonumber
&& \frac{1}{\Gamma(2-\alpha)}\sum_{j=0}^{l-1} \tau_{j}^{-\alpha}a_{j,l}\left(y(x_i,t_{j+1})-y(x_i,t_j)\right)=-v \frac{y(x_{i+1},t_l)-y(x_{i-1},t_l)}{2h}\\
\nonumber && +D\frac{y(x_{i+1},t_l)-2y(x_{i},t_l)+y(x_{i-1},t_l)}{h^2}+e^{\lambda t_l}f_i^l+R_i^{l}, ~~i=1,\ldots,K-1,~l=1,\ldots,n,
\end{eqnarray}
where $\left\|R^
{l}\right\|_{\infty} =\max_{1 \le i \le K-1}\left|R^
{l}\right|\le C_1(n^{\beta}+h^2)$, being $\beta=-\min\left\{2-\alpha,r \alpha\right\}$ and $C_1$ a positive constant not depending on $n$ or $h$.\
Define the error at every point of the mesh by
\[e_i^l=y(x_i,t_l)-Y_i^l,~~i=1,\ldots,K-1,~l=1,\ldots,n,\]
and $\mathbf{e}^l=(e_1^l~~e_2^l~\ldots ~e_{K-1}^l)^T$. Obviously $\mathbf{e}^0=(0~~0~\ldots ~0)^T$, and
\[T_1 e_i^l=T_2 e_{i-1}^l+R_i^l,~~i=\,\ldots,K-1,~~l=1,\ldots,n.\]
We first prove the following result:
\begin{lemma}
There exists a positive constant $C_1$ not depending on $n$ and $h$ such that:
\begin{equation}\label{thm}\left\|\mathbf{e}^l\right\|_{\infty} \le \frac{C_1(n^{\beta}+h^2)}{\frac{1}{\Gamma(2-\alpha)}\left( \tau_{l-1}^{-\alpha}-\sum_{j=0}^{l-2}\left( \tau_{j+1}^{-\alpha}a_{j+1,l}-\tau_j^{-\alpha}a_{j,l}\right)\right)},~~l=1,2,\ldots,n.\end{equation}
\end{lemma}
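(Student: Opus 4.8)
The plan is to prove the bound by induction on $l$, following closely the argument already used for stability. Before starting I would simplify the denominator: by the telescoping identity (\ref{pos1}) the quantity in it equals $\tau_0^{-\alpha}a_{0,l}/\Gamma(2-\alpha)$, which is strictly positive by (\ref{pos}); writing $G_l = \tau_0^{-\alpha}a_{0,l}/\Gamma(2-\alpha)$ and $Q = C_1(n^{\beta}+h^2)$, the claim becomes $\|\mathbf{e}^l\|_{\infty} \le Q/G_l$. The starting point is the error equation $T_1 e_i^l = T_2 e_i^{l-1} + R_i^l$ with $\mathbf{e}^0 = 0$ and the consistency estimate $\|R^l\|_{\infty} \le Q$ recorded above.

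For the base case $l=1$ I would pick $p$ with $\|\mathbf{e}^1\|_{\infty} = |e_p^1|$ and repeat the manipulation from the stability proof: the hypothesis $h < 2D/v$ makes the coefficients $D/h^2 \pm v/(2h)$ nonnegative, so replacing $|e_p^1|$ by $|e_{p\pm1}^1|$ in the spatial part only increases the right-hand side, giving $\frac{\tau_0^{-\alpha}}{\Gamma(2-\alpha)}\|\mathbf{e}^1\|_{\infty} \le |T_1 e_p^1| = |T_2 e_p^0 + R_p^1|$. Since $\mathbf{e}^0=0$ the $T_2$ term vanishes and only $|R_p^1| \le Q$ survives, which is exactly $\|\mathbf{e}^1\|_\infty \le Q/G_1$ because $a_{0,1}=1$. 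For the inductive step I assume $\|\mathbf{e}^j\|_\infty \le Q/G_j$ for $j=1,\dots,l-1$, choose $p$ realizing $\|\mathbf{e}^l\|_\infty = |e_p^l|$, and again use $h<2D/v$ to reach $\frac{\tau_{l-1}^{-\alpha}}{\Gamma(2-\alpha)}\|\mathbf{e}^l\|_\infty \le |T_2 e_p^{l-1}| + |R_p^l|$. I would then rearrange $T_2 e_p^{l-1}$ by summation by parts into $\frac{1}{\Gamma(2-\alpha)}\big(\tau_0^{-\alpha}a_{0,l}e_p^0 + \sum_{j=0}^{l-2}(\tau_{j+1}^{-\alpha}a_{j+1,l}-\tau_j^{-\alpha}a_{j,l})e_p^{j+1}\big)$, drop the $e_p^0$ term, and use the positivity (\ref{pos2}) of the increments together with the induction hypothesis to bound each $|e_p^{j+1}|$.

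The main obstacle is that the induction hypothesis supplies the level-dependent bounds $Q/G_{j+1}$, whereas the telescoping identity (\ref{pos1}) collapses the sum cleanly only when the factor multiplying the positive increments is constant. The step therefore hinges on the monotonicity $G_{l} \le G_{j+1}$ for $j+1 \le l$, equivalently that $a_{0,l} = l^{r(1-\alpha)} - (l^r-1)^{1-\alpha}$ is nonincreasing in $l$; this follows from the concavity of $u \mapsto u^{1-\alpha}$ (so that $u\mapsto u^{1-\alpha}-(u-1)^{1-\alpha}$ decreases) composed with the increasing map $l \mapsto l^r$. Granting this, I replace every $Q/G_{j+1}$ by the larger $Q/G_l$, pull it out of the sum, use (\ref{pos1}) to evaluate $\sum_{j=0}^{l-2}(\tau_{j+1}^{-\alpha}a_{j+1,l}-\tau_j^{-\alpha}a_{j,l}) = \tau_{l-1}^{-\alpha}-\tau_0^{-\alpha}a_{0,l}$, and observe that the term $\frac{Q}{G_l}\frac{\tau_0^{-\alpha}a_{0,l}}{\Gamma(2-\alpha)} = Q$ exactly cancels the residual contribution $|R_p^l| \le Q$. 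What remains is $\frac{\tau_{l-1}^{-\alpha}}{\Gamma(2-\alpha)}\|\mathbf{e}^l\|_\infty \le \frac{\tau_{l-1}^{-\alpha}}{\Gamma(2-\alpha)}\frac{Q}{G_l}$, which closes the induction and yields the stated estimate.
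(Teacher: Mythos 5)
Your proof is correct and follows essentially the same route as the paper's: induction on $l$, the maximum-point argument from the stability proof under $h<\frac{2D}{v}$, summation by parts on $T_2$, and the telescoping identity (\ref{pos1}) to collapse the sum and close the induction. The only difference is that you justify explicitly (via concavity of $u\mapsto u^{1-\alpha}$) the monotonicity $G_l\le G_{j+1}$, i.e.\ that $a_{0,l}$ is nonincreasing in $l$, which the paper uses silently when it replaces the level-$j$ denominator by the level-$l$ one --- a welcome clarification, but not a different argument.
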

\begin{proof}
We use mathematical induction to prove (\ref{thm}). Similarly to the proof of stability, for $l=1$, let $p \in \mathbb{N}$ be such that $\left\|\mathbf{e}^
1\right\|_{\infty}=\max_{1\le i \le K-1}\left|e_i^1\right|=\left|e_p^1\right|$. Then,
\begin{eqnarray}
\nonumber \frac{\tau_0^{-\alpha}}{\Gamma(2-\alpha)}\left\|\mathbf{e}^
1\right\|_{\infty}&=&\frac{\tau_0^{-\alpha}}{\Gamma(2-\alpha)}\left|e_p^1\right|\\
\nonumber &=& \left|T_1e^1\right|=\left|T_2e_p^0+R_p^1\right|=\left|R_p^1\right|\le \left\|R^
1\right\|_{\infty} \le C_1(n^{\beta}+h^2),
\end{eqnarray}
and then (\ref{thm}) is satisfied for $l=1$. Assume now that 
\[\left\|\mathbf{e}^
j\right\|_{\infty} \le \frac{C_1(n^{\beta}+h^2)}{\frac{1}{\Gamma(2-\alpha)}\left( \tau_{j-1}^{-\alpha}-\sum_{m=0}^{j-2}\left( \tau_{m+1}^{-\alpha}a_{m+1,l}-\tau_m^{-\alpha}a_{m,l}\right)\right)},~~j=\,2,\ldots,l-1,\]and  that $p\in\mathbb{N}$ is such that $\left\|\mathbf{e}^
l\right\|_{\infty}=\left|e_p^l\right|$. Hence,
\begin{eqnarray}
\nonumber && \frac{\tau_{l-1}^{-\alpha}}{\Gamma(2-\alpha)}\left\|\mathbf{e}^
l\right\|_{\infty}=\frac{\tau_{l-1}^{-\alpha}}{\Gamma(2-\alpha)}\left|e_p^l\right| \le  \left|T_1e_p^l\right|=\left|T_2\epsilon_p^{l-1}+R_p^l\right|\\
\nonumber && = \left| \frac{\tau_{l-1}^{-\alpha}}{\Gamma(2-\alpha)}\epsilon_p^{l-1}-\frac{1}{\Gamma(2-\alpha)}\sum_{j=0}^{l-2} \tau_{j}^{-\alpha}a_{j,l}\left(\epsilon_p^{j+1}-\epsilon_p^{j}\right)+R_p^l\right|\\
\nonumber && = \left| \frac{1}{\Gamma(2-\alpha)}\sum_{j=0}^{l-2}\left( \tau_{j+1}^{-\alpha}a_{j+1,l}-\tau_j^{-\alpha}a_{j,l}\right) e_p^j +R_p^l\right|\\
\nonumber && \le  \frac{1}{\Gamma(2-\alpha)}\sum_{j=0}^{l-2}\left( \tau_{j+1}^{-\alpha}a_{j+1,l}-\tau_j^{-\alpha}a_{j,l}\right) \left\|\mathbf{e}^j \right\|_{\infty} +\left\|R^l\right\|_{\infty}.
\end{eqnarray}
Therefore,
\begin{eqnarray}
\nonumber && \frac{\tau_{l-1}^{-\alpha}}{\Gamma(2-\alpha)}\left\|\mathbf{e}^
l\right\|_{\infty} \le  \frac{1}{\Gamma(2-\alpha)}\sum_{j=0}^{l-2}\left( \tau_{j+1}^{-\alpha}a_{j+1,l}-\tau_j^{-\alpha}a_{j,l}\right)\\ 
\nonumber && \times \frac{C_1(n^{\beta}+h^2)}{\frac{1}{\Gamma(2-\alpha)}
\left( \tau_{j-1}^{-\alpha}-\sum_{m=0}^{j-2}\left( \tau_{m+1}^{-\alpha}a_{m+1,l}-\tau_m^{-\alpha}a_{m,l}\right)\right)} +C_1(n^{\beta}+h^2)\\
\nonumber && \le  \frac{1}{\Gamma(2-\alpha)}\sum_{j=0}^{l-2}\left( \tau_{j+1}^{-\alpha}a_{j+1,l}-\tau_j^{-\alpha}a_{j,l}\right) \frac{C_1(n^{\beta} +h^2)}{\frac{1}{\Gamma(2-\alpha)}\left( \tau_{l-1}^{-\alpha}-\sum_{m=0}^{l-2}\left( \tau_{m+1}^{-\alpha}a_{m+1,l}-\tau_m^{-\alpha}a_{m,l}\right)\right)} +\\ \nonumber && +C_1(n^{\beta}+h^2)\\
\nonumber && = \frac{C_1(n^{\beta} +h^2)}{\frac{1}{\Gamma(2-\alpha)}\left( \tau_{j-1}^{-\alpha}-\sum_{m=0}^{l-2}\left( \tau_{m+1}^{-\alpha}a_{m+1,l}-\tau_m^{-\alpha}a_{m,l}\right)\right)}  \frac{\tau_{l-1}^{-\alpha}}{\Gamma(2-\alpha)}
\end{eqnarray}
and then the result is proved. 
\end{proof}\\
The result about the convergence order is given in the next theorem:
\begin{teor}
There exists a positive constant $C$ not depending on $n$ and $h$, such that
\[\left\|\mathbf{e}^l\right\|_{\infty} \le C(n^{\beta}+h^2),~~l=1,\ldots,n,\]
where $\displaystyle \beta=-\min\left\{2-\alpha,r \alpha\right\}$.
\end{teor}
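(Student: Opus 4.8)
The plan is to derive the theorem directly from the preceding lemma by showing that the denominator appearing in (\ref{thm}) is bounded below by a positive constant independent of $n$, $h$, and $l$. The first step is to simplify that denominator: recognizing the inner sum as telescoping and invoking identity (\ref{pos1}) of Lemma \ref{le}, I would rewrite
\[
\tau_{l-1}^{-\alpha}-\sum_{j=0}^{l-2}\left(\tau_{j+1}^{-\alpha}a_{j+1,l}-\tau_j^{-\alpha}a_{j,l}\right)
=\tau_{l-1}^{-\alpha}-\left(\tau_{l-1}^{-\alpha}-\tau_0^{-\alpha}a_{0,l}\right)
=\tau_0^{-\alpha}a_{0,l}.
\]
Thus the bound (\ref{thm}) collapses to
\[
\left\|\mathbf{e}^l\right\|_\infty \le \Gamma(2-\alpha)\,C_1\left(n^\beta+h^2\right)\frac{\tau_0^\alpha}{a_{0,l}},
\]
and the whole theorem reduces to proving that $\tau_0^\alpha/a_{0,l}$ remains bounded as $n\to\infty$, uniformly over $l=1,\ldots,n$.

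To control this ratio I would substitute the explicit values $\tau_0=T/n^r$, so that $\tau_0^\alpha=T^\alpha n^{-r\alpha}$, and, from (\ref{coef}) with $j=0$, $a_{0,l}=\left(l^r\right)^{1-\alpha}-\left(l^r-1\right)^{1-\alpha}$. The crucial estimate is a lower bound for $a_{0,l}$. Applying the mean value theorem to $x\mapsto x^{1-\alpha}$ on the interval $[l^r-1,l^r]$ yields $a_{0,l}=(1-\alpha)\xi^{-\alpha}$ for some $\xi\in(l^r-1,l^r)$; since $x^{-\alpha}$ is decreasing we have $\xi^{-\alpha}\ge\left(l^r\right)^{-\alpha}$, and hence $a_{0,l}\ge(1-\alpha)l^{-r\alpha}$. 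Combining these,
\[
\frac{\tau_0^\alpha}{a_{0,l}}\le\frac{T^\alpha n^{-r\alpha}}{(1-\alpha)\,l^{-r\alpha}}
=\frac{T^\alpha}{1-\alpha}\left(\frac{l}{n}\right)^{r\alpha}\le\frac{T^\alpha}{1-\alpha},
\]
where the final inequality uses $l\le n$ together with $r\alpha>0$. This is exactly the uniform bound required.

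Assembling the pieces, the theorem then holds with $C=\Gamma(2-\alpha)\,T^\alpha C_1/(1-\alpha)$, a constant depending only on $\alpha$, $T$ and the constant $C_1$ furnished by the lemma, and not on $n$ or $h$. I expect the main obstacle to be the lower bound on $a_{0,l}$: the argument hinges on the telescoping simplification not degenerating the denominator below order $\tau_0^\alpha$, and on the mean value theorem converting the awkward difference $\left(l^r\right)^{1-\alpha}-\left(l^r-1\right)^{1-\alpha}$ into the clean estimate $(1-\alpha)l^{-r\alpha}$. The delicate point is the exponent bookkeeping that makes the powers of $n$ coming from $\tau_0^\alpha$ and from $a_{0,l}$ cancel against each other, leaving only the harmless factor $\left(l/n\right)^{r\alpha}\le 1$; a single sign or exponent slip there would destroy the estimate.
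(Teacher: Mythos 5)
Your proposal is correct and follows essentially the same route as the paper: both collapse the denominator in (\ref{thm}) to $\tau_0^{-\alpha}a_{0,l}$ via the telescoping identity (\ref{pos1}) and then bound $a_{0,l}$ from below by a multiple of $\left(l^r\right)^{-\alpha}$, so that the powers of $n$ cancel and only a harmless factor bounded by $T^{\alpha}$ survives. The single difference is in your favor: where the paper deduces a non-explicit constant $C_2$ from the limit $\lim_{\eta\to\infty}\eta^{-\alpha}/\bigl(\eta^{1-\alpha}-(\eta-1)^{1-\alpha}\bigr)=1/(1-\alpha)$, your mean value theorem argument gives the uniform explicit bound $a_{0,l}\ge(1-\alpha)l^{-r\alpha}$ for every $l$, hence the explicit constant $C=\Gamma(2-\alpha)\,T^{\alpha}C_1/(1-\alpha)$.
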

\begin{proof}
First note that from Lemma \ref{le},
\[\tau_{l-1}^{-\alpha}-\sum_{j=0}^{l-2}\left( \tau_{j+1}^{-\alpha}a_{j+1,l}-\tau_j^{-\alpha}a_{j,l}\right)=\tau_0^{-\alpha}a_{0,l}=\tau_0^{-\alpha}\left( \left( l^r\right)^{1-\alpha}-\left( l^r-1\right)^{1-\alpha}\right).\]
Since
\[\lim\limits_{l\rightarrow \infty} \frac{\left( l^r\right)^{-\alpha}}{\left( l^r\right)^{1-\alpha}-\left( l^r-1\right)^{1-\alpha}}=\lim\limits_{\eta\rightarrow \infty} \frac{\eta^{-\alpha}}{\eta^{1-\alpha}-\left( \eta-1\right)^{1-\alpha}}=\frac{1}{1-\alpha},\]
there must exist a positive constant $C_2$, not depending on $n$ and $h$, such that (\ref{thm}) becomes
\[\left\|\mathbf{e}^l\right\|_{\infty} \le \frac{C_1C_2(\tau +h)}{\frac{1}{\Gamma(2-\alpha)}(l^r)^{-\alpha}\tau_0^{-\alpha}}=\frac{C_1C_2(n^{\beta}+h^2)}{\frac{1}{\Gamma(2-\alpha)}t_l^{-\alpha}} \le C(n^{\beta}+h^2).\] 
\end{proof}
\begin{Rem}
By using this graded mesh, and taking into account this last Theorem, we see that the time accuracy of $\left(2-\alpha\right)$ is recovered in the case of nonsmooth enough solutions, if we choose the grading exponent to be $r=\frac{2-\alpha}{\alpha}$.
\end{Rem}
\section{Numerical results}\label{numerics}
To illustrate the proposed numerical method, we first consider an example with a known analytical solution.  By taking in (\ref{TTFADE})-(\ref{Tbound_cond}) $v=D=1$, $T=L=1$ and $g(x)=0$, the function $\displaystyle f(x,t)$ is defined in such a way that the exact solution is given by $\displaystyle u(x,t)=e^{-\lambda t}t^{\alpha}x(1-x)$. We will only be concerned with the improvement of the time accuracy, which is achieved with the graded time mesh, and then in our numerical experiments, we have fixed a small space stepsize, $h=10^{-4}$.\\
In Table \ref{tab1} we present, for this example with $\alpha=0.5$ and $\lambda=1$, the values of the maximum of the absolute errors at the discretization points:
\[E=\max_{i,j}\left|u(x_i,t_j)-U_i^j\right|,\]
as well as experimental time convergence orders (EOC). As we can see from the results in that table, the  time convergence order is increased by using a graded mesh with grading exponent $r=3$.
\begin{table}[htb]
\centering
\begin{tabular}{c||c}
\hline\hline
$r=1$ & $r=3$\\
\hline \hline
\begin{tabular}{c|cc}
$n$ & $E$ & $EOC$\\
\hline
5 & $3.86 \times 10^{-3}$ & -\\
10 & $4.11 \times 10^{-3}$ & -0.08\\
20 & $3.92 \times 10^{-3}$ & 0.07\\
40 & $3.54 \times 10^{-3}$ &  0.15\\
80 & $3.05 \times 10^{-3}$ &  0.21\\
160 & $2.54 \times 10^{-3}$ & 0.26
\end{tabular} &
\begin{tabular}{cc}
$E$ & $EOC$\\
\hline
$2.72 \times 10^{-3}$ & -\\
$1.48 \times 10^{-3}$ & 0.88\\
$7.57 \times 10^{-4}$ & 0.97\\
$3.38 \times 10^{-4}$ & 1.16\\
$1.39 \times 10^{-4}$ & 1.29\\
$5.40 \times 10^{-5}$ & 1.36
\end{tabular}\\
\hline \hline
\end{tabular}
\caption{Absolute errors and experimental time convergence order for the first example with $\alpha=0.5$ and $\lambda=1$.}\label{tab1}
\end{table}
To illustrate that the method can also be applied for usual (non-tempered) fractional differential equations, we present in Table \ref{tab2}  the obtained numerical values in the case where $\alpha=0.25$ and $\lambda=0$.

\begin{table}[htb]
\centering
\begin{tabular}{c||c}
\hline\hline
$r=1$ & $r=7$\\
\hline \hline
\begin{tabular}{c|cc}
$n$ & $E$ & $EOC$\\
\hline
$5$  & $3.94 \times 10^{-3}$ & -\\
$10$ & $3.90 \times 10^{-3}$ & 0.01\\
$20$ & $3.79 \times 10^{-3}$ & 0.04\\
$40$ & $3.66 \times 10^{-3}$ & 0.05\\
$80$ & $3.52 \times 10^{-3}$ & 0.06\\
$160$& $3.36 \times 10^{-2}$ & 0.07
\end{tabular} &
\begin{tabular}{cc}
$E$ & $EOC$\\
\hline
$4.09 \times 10^{-3}$ & -\\
$2.47 \times 10^{-3}$ & 0.72\\
$1.12 \times 10^{-3}$ & 1.14\\
$4.33 \times 10^{-4}$ & 1.37\\
$1.53 \times 10^{-4}$ & 1.50\\
$5.10 \times 10^{-5}$ & 1.59
\end{tabular}\\
\hline \hline
\end{tabular}
\caption{Absolute errors and experimental time convergence order for the first example with $\alpha=0.25$ and $\lambda=0$.}\label{tab2}
\end{table}

Next we consider the problem we address here, the modeling of transient currents in ToF experiments.
The total measured current $I(t)$, produced by the extraction of carriers from the space between the electrodes, placed at $x=0$ and $x=L$, is given (\cite{Philippa2011}) by the space average of the current density $j(x,t)$
\begin{equation}\label{curr}
I(t)=\frac{1}{L}\int^{L}_{0}j(x',t)dx',
\end{equation}
and since
\begin{equation}\label{curra}
j(x',t)=-\frac{d}{dt}\int^{x'}_{0}qu(x,t)dx,
\end{equation}
where $q$ is the carrier electrical charge, we get
\begin{equation}\label{currf}
\frac{I(t)}{q}=-\frac{d}{dt}\int^{L}_{0}(L-x)u(x,t)dx.
\end{equation}

We use a set of data from \cite{Takeda} corresponding to measured currents in amorphous boron ($\beta$-rhombohedral boron), presented in Figure \ref{fig:fig1} (symbols). As mentioned above, this data do not allow a description based on a single dispersive parameter. Considering the model with tempered fractional derivatives we obtain the approximate curve (solid line) that fits quite well to the data with $\alpha=0.66$ and $\lambda=1.0 t_T^{-1}$. In the simulation we consider  that the initial carrier number density is gaussian distributed around $x=0.2L$, namely $g(x)\propto\exp(-2\times10^3(x-0.2L)^2)$, with $v=0.38/L(t_T^{-\alpha})$, $D=2.7\times10^{-3}/L^2(t_T^{-\alpha})$ and $r=3$.\\
It should be mentioned that a good fit could not have been obtained with model (\ref{TFADE})-(\ref{bound_cond}) with a single value for the dispersive parameter $\alpha$, as explained in the Introduction. Hence, the model we propose here is more flexible in the sense that for materials that could be accurately described with the model in \cite{Morgado}, we conclude that they continue to be reproduced by using model (\ref{TTFADE})-(\ref{Tbound_cond}) with $\lambda=0$. For those material, as the $\beta$-rhombohedral boron, we need to use model (\ref{TTFADE})-(\ref{Tbound_cond}) and adjust not only the dispersive parameter $\alpha$, but also the temepered parameter $\lambda$.

\begin{figure}\label{fig:fig1}
\centering
\resizebox{0.5\hsize}{!}{\includegraphics{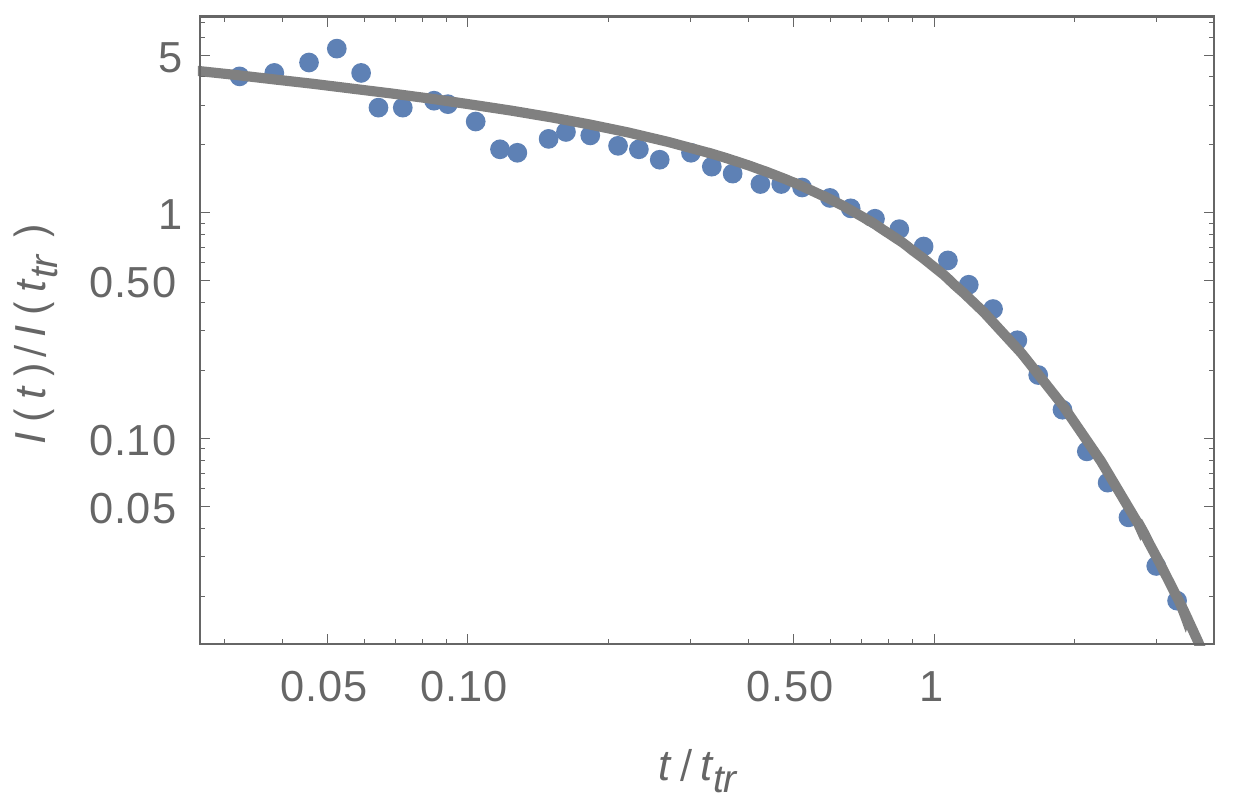}}
\caption{Transient current for amorphous boron ($\beta$-rhombohedral boron) from \cite{Takeda}. Dots: experimental data; Solid line: fitted curve from tempered fractional derivatives model with  $\alpha=0.66$ and $\lambda=1.0 t_T^{-1}$.}
\end{figure}
\section{Conclusions}
Here we have used tempered fractional diffusion-advection equations to model transient currents in ToF experiments in disordered materials. A numerical scheme has been developed, which is stable and $O(n^{-2-\alpha}+h^2)$ accurate. The numerical results show that for some materials it is not enough to consider fractional differential models as (\ref{TFADE}), and by using tempered fractional derivatives instead of the usual ones, the models can be in fact improved.

\vspace{0.2cm}

{\bf Acknowledgments} First author is supported by Portuguese funds through CEMAT - Center for Computacional and Sthocastic Mathematics, and the Portuguese Foundation for Science and Technology (FCT-Funda\c{c}\~ao para a Ci\^encia e a Tecnologia), within project  UID/Multi/04621/2013. Second author acknowledges financial support from Portuguese Foundation for Science and Technology (FCT),
under the contract PEst-OE/EEI/LA0008/2013.

\end{document}